\documentclass[12pt]{article}
\usepackage{amsmath,amsthm,amssymb,hyperref, color}
\usepackage{fullpage}

\newtheorem{thm}{Theorem}[section]
\newtheorem{claim}[thm]{Claim}
\newtheorem{lem}[thm]{Lemma}
\newtheorem{define}[thm]{Definition}
\newtheorem{cor}[thm]{Corollary}

\def\L{{\mathcal L}}

\def\F{{\mathbb{F}}}

\def\R{{\mathbb{R}}}

\def\C{{\mathbb{C}}}

\def\Im{{\mathbf{Im}}}
\def\Ker{{\mathbf{Ker}}}

\newcommand{\ip}[2]{\langle #1,#2 \rangle}

\def\_{\,\,\,\,\,}

\def\poly{\textsf{poly}}

\def\span{\textsf{span}}

\newcommand{\eps}{\epsilon}

\newcommand{\remove}[1]{}

\begin{document}

\title{Tensor rank and dimension expanders}

\author{Zeev Dvir\thanks{Department of Computer Science and Department of Mathematics,
Princeton University.
Email: \texttt{zdvir@princeton.edu}. Research supported by NSF grant DMS-2246682.}}

\date{}
\maketitle

\begin{abstract}
We prove a lower bound on the rank of tensors constructed from families of linear maps that `expand' the dimension of every subspace. Such families, called {\em dimension expanders} have been studied for many years with several known explicit constructions.  Using these constructions we show that one can construct an explicit $[D]\times [n] \times [n]$-tensor with rank at least $(2 - \eps)n$, with $D$ a constant depending on $\eps$. Our results extend to border rank over the real or complex numbers.

\end{abstract} 

\section{Overview}

In the following, $\F$ denotes a field which is either finite or the real/complex numbers.

An $(n_1,n_2,n_3)$-tensor over $\F$ is a function $T : [n_1]\times [n_2]\times [n_3] \rightarrow \F$.  We can form two dimensional `slices' of the tensor $T$  by fixing one of the variables. For example, we denote by $T(a,*,*)$  the $n_2 \times n_3$ matrix obtained by fixing $i_1$ to $a \in \F$. A tensor $T$ has rank one if it is of the form $T(i_1,i_2,i_3) = f_1(i_1)f_2(i_2)f_3(i_3)$ for some functions $f_j : [n_j] \rightarrow \F$. The rank of a general tensor $T$, denoted $R(T)$ is defined to be the minimal $r$ such that $T$ can be written as a sum of $r$ rank-one tensors.\footnote{There are other notions of rank for tensors, all generalizing matrix rank, but the one we defined is typically called just `tensor rank'.} When $\F$ is the real or complex numbers we also define the {\em border rank} of $T$, denoted $\underline R(T)$ to be the smallest $r$ such that there is a sequence of rank $\leq r$ tensors converging to $T$.\footnote{One can define border rank also over fields of positive characteristic as being in the Zariski closure of low rank tensors.} While it is easy to show the {\em existence} of tensors of high rank (as high as $\Omega(n^2)$ for $(n,n,n)$ tensors) it is a longstanding open problem to construct high-rank tensors explicitly. By explicit construction we mean that, there is a deterministic algorithm that, given $n$, runs in time $\poly(n)$ and outputs an $(n,n,n)$-tensor $T_n$ of the required rank (more formally, an explicit construction produces an infinite  {\em family} of tensors, not a single one).   The difficulty in constructing explicit tensors of high rank stems from the fact that  such constructions would lead to lower bounds on arithmetic circuits, a notoriously difficult open problem in theoretical computer science (see \cite{SY-survey} for more details on this connection).

 The currently best known explicit lower bounds on tensor rank are those in \cite{AFT11} and are of the form $3n - O(\log_2(n))$ for an $(n,n,n)$-tensor (over any field $\F$). Their work also includes a lower bound of $2n - 1$ on the rank of an explicit  $(\log_2(n),n,n)$-tensor (for $n$ a power of two). It is not stated in \cite{AFT11}, but one can also slightly modify their Theorem 5.6 to construct a $(C,n,n)$-tensor with rank $(2 - 2^{-c})n$. As far as we can tell, the above mentioned results from \cite{AFT11} do not extend to border rank.

 For border rank the best explicit lower bound is that of \cite{LM19}  and is of the form $2.02 n$ for an explicit  $(n,n,n)$-tensor. To the best of our knowledge, the best  border rank lower bounds for $(D,n,n)$-tensors with constant $D$ are of the form $3n/2$ \cite{Gri78,JaJa78,Str83,Grie86}. In fact, one can get this bound even with $D=3$.

Our main contribution is a new way to construct $(D,n,n)$-tensors of rank (or border rank)  $(2-\eps)n$ with  $D$ a constant only depending on $\eps$. We  show a general way to construct such tensors from small families of linear maps that `expand' or `spread' the dimension of any subspace. The property we need is captured in the following definition.

\begin{define}[dimension-spreading]
Let $\L = \{L_1,\ldots,L_D\}$ be a set of linear maps $L_i : \F^n \rightarrow \F^n$. We say that $\L$ is {\em (s,t)-dimension-spreading } if, for every subspace $U \subset \F^n$ of dimension $\dim(U) \geq s$ we have
$$ \dim\left( \sum_{i=1}^D L_i(U) \right) \geq t.$$ We can  extend the definition to sets of $n \times n$ matrices (thinking of them as linear maps).
\end{define}

To state our main theorem, we need one additional notation. Given a  set of $n \times n$ matrices ${\cal A} = \{A_1,\ldots,A_D\}$ we define the $(D,n,n)$-tensor $T^{\cal A}$ to be
$$ T^{\cal A}(i,j,k) = (A_i)_{jk}. $$ In other words, $T^{\cal A}$ is the tensor composed of the $D$ `slices' given by the matrices $A_1,\ldots,A_D$.
\begin{thm}\label{thm-main}
Let $\F$ be any field. If ${\cal A} = \{A_1,\ldots,A_D\}$ is  a set of  $n\times n$  matrices that is  $(s,t)$-dimension spreading then the $(D,n,n)$-tensor $T^{\cal A}$ has rank at least $n+t - s$. Furthermore, if $\F$ is the real or complex numbers, then the same bound holds for border-rank as well. 
\end{thm}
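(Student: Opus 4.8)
The plan is to combine the \emph{substitution method} for tensor rank with the dimension-spreading hypothesis, peeling off one dimension at a time from the third factor, and then to finish with an elementary flattening-rank bound. I identify a $(D,n,n)$-tensor with the $D$-tuple of its first-direction slices, so that $\mathcal A$ and $T^{\mathcal A}$ determine each other; it is convenient to prove the statement for rectangular matrices, namely: if $\mathcal A=\{A_1,\dots,A_D\}$ is an $(s,t)$-dimension-spreading family of $m\times p$ matrices (linear maps $\F^p\to\F^m$), then $R(T^{\mathcal A})\ge p+t-s$, the theorem being the case $m=p=n$. One may assume $1\le s\le p$ and $t\ge 1$, the remaining cases being vacuous or degenerate.

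The one external ingredient I would invoke is the substitution method in the form: if $T$ is a $(D,m,p)$-tensor whose third-direction slices $S_1,\dots,S_p\in\F^D\otimes\F^m$ are not all zero, then after permuting we may assume $S_p\neq 0$, and $R(T)\ge 1+\min_{\lambda_1,\dots,\lambda_{p-1}\in\F} R\big(\sum_{k=1}^{p-1}(S_k-\lambda_k S_p)\otimes e_k\big)$, where the inner tensor lives in $\F^D\otimes\F^m\otimes\F^{p-1}$ (the proof is the standard one: in an optimal decomposition a single rank-one term can be made to absorb the $p$-th slice). The crucial point is that applying this to $T=T^{\mathcal A}$ keeps us inside the class of tensors built from spreading families: the $\lambda$-modified tensor is exactly $T^{\mathcal A'}$ with $A'_i=A_iQ_\lambda$, where $Q_\lambda\colon\F^{p-1}\hookrightarrow\F^p$ sends $e_k\mapsto e_k-\lambda_k e_p$. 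Since $Q_\lambda$ is injective, $\mathcal A'(U)=\mathcal A(Q_\lambda U)$ and $\dim Q_\lambda U=\dim U$ for every $U\subseteq\F^{p-1}$, so $\mathcal A'$ is again $(s,t)$-dimension-spreading, now with column dimension $p-1$. Moreover, whenever $p>s$ and $t\ge 1$ some $A_i$ is nonzero (otherwise $U=\F^p$ would satisfy $\dim U\ge s$ yet $\mathcal A(U)=0$), so the hypothesis of the lemma is met, and this persists through every step.

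Iterating the substitution $n-s$ times therefore yields $R(T^{\mathcal A})\ge(n-s)+R(T^{\mathcal A^*})$ for some $(s,t)$-dimension-spreading family $\mathcal A^*=\{A^*_1,\dots,A^*_D\}$ of $n\times s$ matrices. Taking $U=\F^s$ — the only subspace of dimension $\ge s$ — gives $\dim\sum_i A^*_i(\F^s)=\dim\sum_i\mathrm{colspace}(A^*_i)\ge t$. On the other hand $R(T^{\mathcal A^*})$ is at least the rank of the second-direction flattening of $T^{\mathcal A^*}$, i.e.\ the dimension of the span of its second-direction slices; a one-line computation identifies the kernel of that flattening with $\big(\sum_i\mathrm{colspace}(A^*_i)\big)^{\perp}$, so the flattening has rank $\dim\sum_i\mathrm{colspace}(A^*_i)\ge t$. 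Hence $R(T^{\mathcal A^*})\ge t$ and $R(T^{\mathcal A})\ge(n-s)+t=n+t-s$.

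For border rank over $\R$ or $\C$ I would sidestep any border version of the substitution method and argue by closure. If $\underline R(T^{\mathcal A})=r$, choose tensors $T_\varepsilon\to T^{\mathcal A}$ with $R(T_\varepsilon)\le r$; their first-direction slices form families $\mathcal A_\varepsilon\to\mathcal A$ with $T_\varepsilon=T^{\mathcal A_\varepsilon}$. Now the set of $(s,t)$-dimension-spreading $D$-tuples is open: its complement is the image of the closed set $\{(\mathcal B,U):\dim\sum_i B_i(U)\le t-1\}$ under the projection $(\mathrm{Mat}_n(\F))^D\times\mathrm{Gr}(s,n)\to(\mathrm{Mat}_n(\F))^D$, which is a closed map because $\mathrm{Gr}(s,n)$ is compact (here I use lower semicontinuity of rank, which makes that set closed). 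Thus $\mathcal A_\varepsilon$ is $(s,t)$-dimension-spreading for all small $\varepsilon$, and the already-proven rank bound gives $r\ge R(T^{\mathcal A_\varepsilon})\ge n+t-s$. The step I expect to need the most care is the bookkeeping around the substitution — verifying that the constants $s,t$ are preserved \emph{exactly} at each step and that the lemma's nondegeneracy hypothesis keeps holding — which is handled by the observation that a third-direction substitution is nothing but restricting the common domain $\F^p$ of the $A_i$ to a hyperplane.
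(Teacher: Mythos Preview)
Your argument is correct and genuinely different from the paper's. You proceed by the substitution method, the key observation being that a third-direction substitution precomposes every $A_i$ with a common injection $Q_\lambda:\F^{p-1}\hookrightarrow\F^p$ and therefore preserves $(s,t)$-spreading exactly; after $n-s$ such steps you finish with a flattening bound on the resulting $n\times s$ family. The paper instead argues in one stroke from a putative rank-$r$ decomposition: if $A_1,\dots,A_D$ lie in the span of rank-one matrices $E_1,\dots,E_r$, then taking $S=\{1,\dots,n-s\}$ the subspace $K_S=\bigcap_{\ell\in S}\Ker(E_\ell)$ has dimension at least $s$, while every $A_i$ maps $K_S$ into $I_{\bar S}=\sum_{\ell\notin S}\Im(E_\ell)$, which has dimension at most $r-(n-s)<t$; this directly contradicts spreading. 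The paper's route is shorter and produces the witness subspace explicitly; yours has the merit of explaining the shape of the bound as $(n-s)+t$, with $n-s$ coming from the column-peeling and $t$ from the residual flattening, and of fitting into the standard substitution framework. For border rank the two arguments are essentially the same in substance: both rest on compactness of $\mathrm{Gr}(s,n)$ together with lower semicontinuity of rank, the paper by passing a sequence of violating $s$-planes to a convergent subsequence, you by phrasing the same fact as ``non-spreading is closed, hence spreading is open''.
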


We prove this theorem in Section~\ref{sec-mainproof}. An interesting special case of this theorem is that of $(\eps n, (1-\eps)n)$-dimension spreading maps with $\eps$ a small constant. In this case, we can take $D$ to depend only on $\eps$ and the rank bound becomes $(2-2\eps)n$. We discuss explicit constructions of such dimension-spreading maps in Section~\ref{sec-dimexp}. We observe  that these can be constructed in a black-box manner from {\em dimension-expanders} for which there are several known explicit constructions \cite{DS07, DW10,LZ08,BY13,FG15,GRX18}. In particular, using the  construction in \cite{BY13} one can get, for any $\eps$ and any sufficiently large $n$ an explicit family of $(\eps n, (1-\eps)n)$-dimension spreading maps over $\F^n$ of size $D = \poly(1/\eps)$.\footnote{We use the notation $\poly(f) = O(f^C)$ with $C$ some absolute constant.
} A nice property of this construction is that it is field independent -- the same construction works over any field and the defining matrices have only zero and one entries.  Hence, we get the following corollary. 
\begin{cor}\label{cor-zeroone}
	For every $\eps > 0$, there is an explicit family of $(\poly(1/\eps),n,n)$-tensors $T^{n,\eps}$  (defined using zeros and ones only) that has rank at least $(2 - \eps)n$ over any field (or border-rank if $\F$ is the real or complex numbers).
\end{cor}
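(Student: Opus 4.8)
The plan is to deduce Corollary~\ref{cor-zeroone} from Theorem~\ref{thm-main} by feeding it a strong enough explicit dimension-spreading family of $0/1$ matrices. Fix $\eps>0$ and set $\eps'=\eps/2$. What is needed is, for every sufficiently large $n$, a family $\mathcal{A}=\{A_1,\dots,A_D\}$ of $n\times n$ matrices with $D=\poly(1/\eps')$ such that: (i) $\mathcal{A}$ is $(\eps' n,\,(1-\eps')n)$-dimension-spreading; (ii) every entry of every $A_i$ is $0$ or $1$; (iii) the same family is dimension-spreading over every field; and (iv) $\mathcal{A}$ is produced by a deterministic $\poly(n)$-time algorithm. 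Given such a family, Theorem~\ref{thm-main} immediately yields, over any field,
$$R\!\left(T^{\mathcal{A}}\right)\;\ge\;n+(1-\eps')n-\eps' n\;=\;(2-2\eps')n\;=\;(2-\eps)n,$$
and the identical bound for $\underline{R}(T^{\mathcal{A}})$ when $\F$ is $\R$ or $\C$. Since $T^{\mathcal{A}}(i,j,k)=(A_i)_{jk}$, property (ii) says $T^{n,\eps}:=T^{\mathcal{A}}$ is a $0/1$ tensor, property (iv) says the family is explicit, and $D=\poly(1/\eps')=\poly(1/\eps)$; for the finitely many $n$ that are too small for the construction one may output an arbitrary tensor without affecting the asymptotic statement.

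The real content is the construction of the family in (i)--(iv), which I would obtain as a black-box reduction from an explicit constant-degree dimension expander, exactly as developed in Section~\ref{sec-dimexp}. The seed is the Bourgain--Yehudayoff expander \cite{BY13}: a set of $d=O(1)$ maps $\F^n\to\F^n$ arising from a constant-degree monotone expander, whose matrices are $0/1$ (indeed partial permutation) matrices and whose expansion is combinatorial and therefore holds over every field. A single such expander only guarantees $(\alpha n,(1+c)\alpha n)$-spreading for fixed $\alpha,c$ --- which through Theorem~\ref{thm-main} gives merely $(1+c\alpha)n$ --- so to reach the parameters $(\eps' n,(1-\eps')n)$ one amplifies: roughly, pass to the set of all $k$-fold compositions $L_{i_1}\circ\cdots\circ L_{i_k}$ for $k=O(\log(1/\eps'))$ (also using the construction's strength on subspaces of dimension above $n/2$), which keeps the number of maps at $d^{\,O(\log(1/\eps'))}=\poly(1/\eps')$ since $d=O(1)$. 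This is precisely the assertion made in the paragraph preceding the corollary, so in a full write-up I would invoke Section~\ref{sec-dimexp} for it.

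The step that deserves attention is keeping (ii)--(iii) intact through the amplification: composing the expander's matrices as abstract linear maps would destroy the $0/1$ property, so the compositions must be performed at the level of the underlying monotone/bipartite matchings --- where a composite of partial matchings is again a partial matching, hence still a $0/1$ matrix --- and this is also what makes the construction field-independent. Apart from this (and the routine bookkeeping that $\eps'=\eps/2$ turns $n+t-s$ into $(2-\eps)n$ and $\poly(1/\eps')$ into $\poly(1/\eps)$), the corollary is a direct plug-in: all the difficulty sits in Theorem~\ref{thm-main}, which is assumed here, and in the expander-to-spreading reduction of Section~\ref{sec-dimexp}. In particular the border-rank statement over $\R$ and $\C$ requires no extra work, as it is already part of Theorem~\ref{thm-main}.
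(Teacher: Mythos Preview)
Your proposal is correct and follows essentially the same route as the paper: invoke the Bourgain--Yehudayoff monotone-expander construction to get a constant-size $0/1$ dimension expander, amplify it to an $(\eps' n,(1-\eps')n)$-spreading family via $O(\log(1/\eps'))$-fold compositions as in Section~\ref{sec-dimexp}, and then plug into Theorem~\ref{thm-main} with $\eps'=\eps/2$. One small remark: your worry that matrix composition would destroy the $0/1$ property is unnecessary here, since the BY maps are partial permutation matrices and products of partial permutation matrices are again partial permutation matrices (equivalently, composing at the matching level and multiplying the matrices give the same result).
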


\section{Proof of Theorem~\ref{thm-main}}\label{sec-mainproof}
We start with a simple claim which restates the condition of having low tensor rank.
\begin{claim}\label{cla-ranktospan}
Let $T$ be a $(D,n,n)$-tensor over $\F$ with $R(T) \leq r$. For each $i \in [D]$ let $A_i = T(i,*,*)$ be the $n \times n$ matrix obtained  as a slice of $T$. Then, there exist $r$ rank-one $n \times n$ matrices $E_1,\ldots,E_r$ that simultaneously span all the $A_i$'s. That is,  
$$ \{A_1,\ldots,A_D\} \subset \span\{ E_1,\ldots,E_r \}.$$
\end{claim}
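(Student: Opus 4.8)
The plan is to simply unfold a minimal rank decomposition of $T$ and read off the slices. Since $R(T)\le r$, by definition we can write
$$ T(i_1,i_2,i_3) = \sum_{\ell=1}^r g_\ell(i_1)\, h_\ell(i_2)\, k_\ell(i_3) $$
for suitable functions $g_\ell : [D]\to\F$ and $h_\ell,k_\ell : [n]\to\F$ (padding with zero terms if the true rank is strictly smaller). First I would fix $i\in[D]$ and substitute $i_1 = i$: this gives, for all $j,k\in[n]$,
$$ (A_i)_{jk} = T(i,j,k) = \sum_{\ell=1}^r g_\ell(i)\, h_\ell(j)\, k_\ell(k). $$

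Next I would define, for each $\ell\in[r]$, the $n\times n$ matrix $E_\ell$ by $(E_\ell)_{jk} = h_\ell(j) k_\ell(k)$; equivalently $E_\ell$ is the outer product of the column vectors $h_\ell$ and $k_\ell$, hence has rank at most one. The displayed identity then reads exactly
$$ A_i = \sum_{\ell=1}^r g_\ell(i)\, E_\ell, $$
so $A_i \in \span\{E_1,\ldots,E_r\}$ for every $i\in[D]$, which is the claim.

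The only minor wrinkle is cosmetic: some $E_\ell$ could be the zero matrix (if $h_\ell\equiv 0$ or $k_\ell\equiv 0$), and a zero matrix is not literally ``rank one.'' I would handle this by noting that any such term contributes a zero rank-one tensor to the decomposition of $T$, so it may be discarded, leaving a decomposition with $r'\le r$ genuine rank-one slices $E_\ell$ that still span all the $A_i$; alternatively one can just replace a zero $E_\ell$ by an arbitrary fixed rank-one matrix, which only enlarges the span. There is essentially no hard step here — the statement is a direct reformulation of the definition of tensor rank in terms of the slices $T(i,*,*)$ — so the ``obstacle,'' if any, is merely being careful about this degenerate case and about the indexing conventions for slices.
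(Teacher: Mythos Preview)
Your proof is correct and is essentially identical to the paper's: both take a rank-$r$ decomposition $T=\sum_\ell f^\ell\otimes g^\ell\otimes h^\ell$, set $E_\ell$ to be the outer product of the last two factors, and read off $A_i=\sum_\ell f^\ell(i)E_\ell$. Your extra remark about the degenerate case $E_\ell=0$ is a harmless refinement the paper does not bother with.
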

\begin{proof}
Suppose $T = T^1 + \ldots + T^r$ such that $$T^\ell(i,j,k) = f^\ell(i)g^{\ell}(j)h^\ell(k)$$ is a rank-one tensor for all $\ell \in [r]$. We define the rank one matrices 
$E_\ell $ as follows: 
$$ (E_\ell)_{jk} = g^\ell(j)h^\ell(k).$$
Fix some $i \in [D]$ and notice that
$$ (A_i)_{jk} = T(i,j,k) = \sum_{\ell}T^{\ell}(i,j,k) = \sum_{\ell} f^\ell(i)g^{\ell}(j)h^\ell(k) = \sum_{\ell} f^\ell(i) \left(E_\ell\right)_{jk}.$$ 
And so, $A_i$ is in the span of $E_1,\ldots,E_\ell$ as was required.\footnote{In fact, the proof shows that the two conditions are equivalent, but we will not use it.}
\end{proof}

To prove Theorem~\ref{thm-main}, suppose in contradiction that the rank of $T^{\cal A}$ was 
\begin{equation}\label{eq-boundonr}
	R(T^{\cal A})= r < n+t - s
\end{equation}

Let $E_1,\ldots,E_r$ be the rank-one matrices given by Claim~\ref{cla-ranktospan} spanning the matrices $A_1,\ldots,A_D$.

 For every subset $S \subset [r]$ we denote by $$K_S = \bigcap_{\ell \in S} \Ker(E_\ell)$$ and by $$ I_S = \sum_{\ell \in S} \Im(E_\ell).$$ 
Notice that for every set $S$ we have 
\begin{equation}\label{eq-boundonkerim}
	\dim(K_S) \geq n - |S|, \,\,\,\,\, \dim(I_S) \leq |S|.
\end{equation}

\begin{claim}\label{cla-nonspreading}
For any set $S\subset [r]$  and for every map $A_i$, $i \in [D]$ we have $$A_i(K_S) \subset I_{\overline S}.$$
\end{claim}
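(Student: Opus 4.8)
The plan is to reduce to the case of a single rank-one matrix $E_\ell$ and then patch the pieces together using the definitions of $K_S$ and $I_{\overline S}$. First I would fix $i \in [D]$ and use the spanning property from Claim~\ref{cla-ranktospan}: write $A_i = \sum_{\ell=1}^r c_\ell E_\ell$ for scalars $c_\ell \in \F$. Now take any $x \in K_S$, so $E_\ell(x) = 0$ for every $\ell \in S$. Then in the sum defining $A_i(x)$ all the terms indexed by $\ell \in S$ vanish, leaving $A_i(x) = \sum_{\ell \notin S} c_\ell E_\ell(x)$, which is manifestly a vector in $\sum_{\ell \in \overline S} \Im(E_\ell) = I_{\overline S}$. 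That already gives the claim; the argument is short once the right decomposition is in hand.

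The only point that needs a little care is making sure the decomposition $A_i \in \span\{E_1,\ldots,E_r\}$ from Claim~\ref{cla-ranktospan} is used correctly: the claim provides scalars depending on $i$, and I should keep them abstract (call them $c_\ell^{(i)}$ if I want to be pedantic) since they play no further role — only the fact that $A_i$ is \emph{some} linear combination of the $E_\ell$ matters. I would also remark explicitly that $\Ker(E_\ell)$ and $\Im(E_\ell)$ make sense because each $E_\ell$ is an $n \times n$ matrix, hence a linear map $\F^n \to \F^n$, so $K_S$ and $I_{\overline S}$ are honest subspaces of $\F^n$, and that the bounds in \eqref{eq-boundonkerim} (an intersection of $|S|$ kernels of rank-one maps drops dimension by at most $|S|$; a sum of $|S|$ images of rank-one maps has dimension at most $|S|$) are what will be combined with this claim later.

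I do not anticipate a genuine obstacle here — this is the easy structural step. If anything, the "hard part" is purely bookkeeping: being careful that $S$ and $\overline S$ are complements inside $[r]$ (not $[n]$ or $[D]$), so that every index $\ell \notin S$ is indeed in $\overline S$ and the leftover sum lands in $I_{\overline S}$ as claimed. The real content of the theorem will come afterward, when this containment is combined with a dimension count: $K_S$ is large ($\dim \geq n - |S|$), so if $|S|$ is small the dimension-spreading hypothesis forces $\sum_i A_i(K_S)$ to be large, yet it sits inside $I_{\overline S}$ which has dimension $\leq r - |S|$; choosing $|S|$ appropriately and using $r < n + t - s$ should yield the contradiction. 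But that is the next step, not this one.
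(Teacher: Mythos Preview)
Your proof is correct and essentially identical to the paper's: both write $A_i$ as a linear combination $\sum_\ell \alpha_\ell E_\ell$ via Claim~\ref{cla-ranktospan}, observe that the $S$-indexed terms vanish on $K_S$, and conclude that the remaining sum lies in $I_{\overline S}$. The additional commentary you include about bookkeeping and the subsequent dimension count is accurate but goes beyond what the claim itself requires.
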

\begin{proof}
	Since $$ A_i = \alpha_1 E_1 + \ldots + \alpha_r E_r,$$ for some coefficients $\alpha_1,\ldots,\alpha_r \in \F$ we have that, if $v \in K_S$, then 
	$$ A_i(v) = \sum_{\ell \in \overline S} \alpha_\ell E_\ell(v),$$ which is in $I_{\overline S}$ by definition.
\end{proof}

Let  $S = \{1,2,\ldots, n- s\}$. Then 
\begin{equation}\label{eq-barSsize}
	|\bar S| = r - |S| < n+t - s - (n-s)= t.
	\end{equation}
Now, from Claim~\ref{cla-nonspreading}, (\ref{eq-boundonkerim}) and (\ref{eq-barSsize}) we see that the $A_i$'s are not $(s,t)$-dimension-spreading, since $$A_i(K_S) \subset I_{\bar S},$$ $$\dim(K_S) \geq n - |S| = s$$ and
$$\dim(I_{\bar S}) \leq |\bar S| < t. $$ 

To argue about border rank, suppose that $\F$ is the real or complex numbers and suppose that there is a sequence of rank $\leq r$ tensors $T^m$ converging to $T^{\cal A}$. From the proof above, for each $m$ there will be some $s$ dimensional subspace $U^m$ such that
\begin{equation}\label{eq-dimsumUm}
	\dim\left( \sum_{i=1}^D A_i^m(U^m) \right) < t,
\end{equation}
Where $A^m_1,\ldots,A^m_D$ are the $D$ slices of $T^m$ (so that $A^m_i$ converges to $A_i$ in the limit). By compactness of the Grassmannian, we can w.l.o.g assume that the sequence $U^m$ converges to some $s$ dimensional subspace $\hat U$. Now, we can pass to the limit in (\ref{eq-dimsumUm}) to get that 
\begin{equation*}
	\dim\left( \sum_{i=1}^D A_i(\hat U) \right) < t
\end{equation*} (using lower semi-continuity of matrix rank). Hence, we get that that $\hat U$ violates the $(s,t)$-dimension spreading condition for $\cal A$. This completes the proof of Theorem~\ref{thm-main}.

\section{Constructing dimension-spreading maps}\label{sec-dimexp}
In this section we prove Corollary~\ref{cor-zeroone} by describing an explicit construction of an $(\eps n,(1-\eps)n)$-dimension spreading family of constant (depending on $\eps$) size. The construction  will follow in a black box manner from the related (and well studied) notion of {\em dimension expanders}, defined by Barak, Impagliazzo, Shpilka and Wigderson \cite{BISW} as a linear algebraic analog of expander graphs.

\begin{define}[Dimension Expanders]\label{def-dimexp}
Let $\L = \{L_1,\ldots,L_D\}$ be a set of linear maps $L_i : \F^n \rightarrow \F^n$. We say that $\L$ is a {\em $\tau$-dimension-expander } if, for every subspace $U \subset \F^n$ of dimension $\dim(U) \leq n/2$ we have
$$ \dim\left( \sum_{i=1}^D L_i(U) \right) \geq (1+\tau)\dim(U).$$ We can  extend the definition to sets of $n \times n$ matrices (thinking of them as linear maps).
\end{define}

As we will now show, it is quite straight-forward to construct an $(\eps n, (1-\eps)n)$-spreading family from an $\tau$-expanding one by  taking sufficiently long words (compositions $L_{i_1}\circ L_{i_2} \circ ...$) over the original family.  One potential obstacle is that expansion is only guaranteed up to dimension $n/2$. We now show that this is not really an issue and expansion holds also for larger subspaces. For an $n \times n$ matrix $A$ we denote by $A^{*}$ its conjugate transpose (or just the transpose over finite fields). 
\begin{lem}\label{lem-expandlarge}
Let ${\cal A} = \{A_1,\ldots,A_D\}$ be a set of $n\times n$ matrices that is a $\tau$-dimension expander and such that $\cal A$ is closed under taking conjugate transpose and contains the identity. Let $U \subset \F^n$ be a subspace of dimension $\alpha n$ with $1/2 < \alpha < 1$. Then
$$ \dim\left( \sum_{i=1}^D A_i(U) \right) \geq \left(1+\frac{\tau(1- \alpha)}{2}\right)\dim(U).$$ 
\end{lem}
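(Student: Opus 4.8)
The idea is to reduce the case $\dim U = \alpha n > n/2$ to the already-available case of a subspace of dimension at most $n/2$, by passing to orthogonal complements and exploiting that the dimension-expansion condition is self-dual once ${\cal A}$ is closed under conjugate transpose. Throughout, $V^\perp$ denotes the orthogonal complement of $V\subseteq\F^n$ with respect to the standard form (the bilinear form $\sum_i x_iy_i$ over a finite field, the Hermitian form over $\R$ or $\C$); it is non-degenerate, so $\dim V+\dim V^\perp=n$, $(V^\perp)^\perp=V$, and $(A V)^\perp=(A^*)^{-1}(V^\perp)$ for every matrix $A$.

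First I would set $W:=U^\perp$, so that $\dim W=(1-\alpha)n<n/2$, and rewrite the object we care about in terms of $W$. Using the identity above together with the hypothesis $\{A_1^*,\dots,A_D^*\}=\{A_1,\dots,A_D\}$,
\[
  \Big(\sum_{i=1}^D A_i(U)\Big)^{\perp}
  \;=\;\bigcap_{i=1}^D (A_i^*)^{-1}(W)
  \;=\;\bigcap_{i=1}^D A_i^{-1}(W)\;=:\;W^\star .
\]
Since $\dim\big(\sum_{i=1}^D A_i(U)\big)=n-\dim W^\star$, it now suffices to show $\dim W^\star\le \frac{(1-\alpha)n}{1+\tau}$.

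This is where expansion enters. Because $\id\in{\cal A}$ we have $W^\star\subseteq W$, hence $\dim W^\star\le\dim W<n/2$; and by the very definition of $W^\star$, $A_i(W^\star)\subseteq W$ for every $i$, so $\sum_{i=1}^D A_i(W^\star)\subseteq W$. Applying the $\tau$-expansion guarantee to $W^\star$ (legitimate since $\dim W^\star\le n/2$) yields
\[
  (1+\tau)\dim W^\star\;\le\;\dim\Big(\sum_{i=1}^D A_i(W^\star)\Big)\;\le\;\dim W\;=\;(1-\alpha)n ,
\]
which gives the required bound on $\dim W^\star$, and therefore $\dim\big(\sum_{i=1}^D A_i(U)\big)\ge n-\frac{(1-\alpha)n}{1+\tau}$. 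It then remains to check the elementary inequality $n-\frac{(1-\alpha)n}{1+\tau}\ge\big(1+\tfrac{\tau(1-\alpha)}{2}\big)\alpha n$; clearing denominators and cancelling the nonnegative factor $\tau(1-\alpha)$ reduces it to $\alpha(1+\tau)\le2$, which holds since $\alpha<1$ and $\tau\le1$ (already at a subspace of dimension $\lfloor n/2\rfloor$ the expanded dimension must stay $\le n$).

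I expect the one genuinely non-routine point to be the duality step: recognizing that the obstruction to expanding the large subspace $U$ is governed by the subspace $W^\star\subseteq U^\perp$ of vectors sent into $U^\perp$ by all the $A_i$, and that $W^\star$ is forced to be small precisely because ${\cal A}$ expands small subspaces. The two hypotheses are used exactly here: $\id\in{\cal A}$ forces $W^\star\subseteq U^\perp$ (so $\dim W^\star<n/2$ and expansion applies to it), while closure under conjugate transpose is what lets us identify $\big(\sum_{i=1}^D A_i(U)\big)^\perp$ with $\bigcap_i A_i^{-1}(U^\perp)$ rather than with $\bigcap_i (A_i^*)^{-1}(U^\perp)$, and simultaneously ensures the family appearing there still expands. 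Everything else is dimension bookkeeping and the one-line arithmetic above.
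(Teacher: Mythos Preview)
Your proof is correct and follows essentially the same approach as the paper: both arguments pass to orthogonal complements, apply the $\tau$-expansion hypothesis to the small subspace $\big(\sum_i A_i(U)\big)^\perp$ (your $W^\star$ is exactly the paper's $V^*$), use closure under conjugate transpose to get the containment $\sum_i A_i(V^*)\subseteq U^\perp$, and finish with the same arithmetic inequality $\alpha(1+\tau)\le 2$. The only difference is presentational---you compute $\big(\sum_i A_i(U)\big)^\perp$ via the formula $(AV)^\perp=(A^*)^{-1}(V^\perp)$, whereas the paper verifies the same containment by a direct test-vector calculation.
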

\begin{proof}
Let $U$ be a subspace of dimension $\alpha n$ with $1/2 < \alpha < 1$. Let
$$ V = \sum_{i=1}^D A_i(U).$$ Since we know that $\cal A$ contains the identity, there exists a non negative $\delta \geq 0$ \ such that 
\begin{equation}\label{eq-dimV}
	\dim(V)  = (1+\delta)\dim(U)= (1+\delta)\alpha n.
\end{equation}
Suppose $w \in V^*$, meaning $$ \ip{A_i^* w}{u}= \ip{w}{A_iu}=0$$ for all $u \in U$ and for all $i \in [D]$.  Since $\cal A$ is closed under transpose conjugate, this implies that, if $w \in V^*$ then $\ip{A_iw}{u}=0$ for all $u \in U$ and $i \in [D]$. This means that
\begin{equation*}
	\sum_{i=1}^D A_i(V^*) \subset U^*
\end{equation*}
And so
\begin{equation}\label{eq-dimsumVstar}
	\dim\left(\sum_{i=1}^D A_i(V^*) \right) \leq \dim(U^*) = (1-\alpha)n.
\end{equation}
Combining (\ref{eq-dimV}) and (\ref{eq-dimsumVstar}) and using the dimension expansion property on $V^*$ (which has dimension at most $n/2$) we get that
$$ (1-\alpha)n \geq \dim\left(\sum_{i=1}^D A_i(V^*) \right) \geq (1+\tau)\dim(V^*) = (1+\tau)(1-(1+\delta)\alpha)n.$$
solving for $\delta$ we get that
$$\delta \geq \frac{\tau(1-\alpha)}{(1+\tau)\alpha} \geq \frac{\tau(1-\alpha)}{2}.$$
\end{proof}

We are now ready to show that sufficiently long words over a dimension expander gives a dimension spreading family. We will use the following notation: let ${\cal A} = \{A_1,\ldots,A_D\}$ be a family of $n\times n$ matrices. For an integer $t \geq 1$ we denote by ${\cal A}^{(t)}$ the set of matrices obtained as a product of $ t$ matrices from $\cal A$.
\begin{lem}
Let ${\cal A} = \{A_1,\ldots,A_D\}$ be a set of $n\times n$ matrices that is a $\tau$-dimension expander and such that $\cal A$ is closed under taking conjugate transpose and contains the identity. Let $t > 3\log_2(1/\eps)/\tau$. Then ${\cal A}^{(t)}$ is $(\eps n, (1-\eps)n)$-dimension spreading.
\end{lem}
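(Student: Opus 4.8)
The plan is to iterate the expansion guarantee. Starting from a subspace $U$ of dimension $\eps n$ (the case $\dim(U) \geq \eps n$ reduces to this by shrinking $U$), I will track how the dimension grows as I repeatedly apply the family $\cal A$. Since $\cal A$ contains the identity, the spans are nested: if $U_0 = U$ and $U_{j+1} = \sum_{i=1}^D A_i(U_j)$, then $U_0 \subseteq U_1 \subseteq U_2 \subseteq \cdots$, and one checks that $U_j = \sum_{B \in {\cal A}^{(j)}} B(U)$, so it suffices to show $\dim(U_t) \geq (1-\eps)n$ for $t > 3\log_2(1/\eps)/\tau$.

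The key step is a dichotomy at each stage, combining Definition~\ref{def-dimexp} with Lemma~\ref{lem-expandlarge}. As long as $\dim(U_j) < (1-\eps)n$, I claim the dimension grows by a multiplicative factor of at least $1 + \tau\eps/2$: if $\dim(U_j) \leq n/2$, this is immediate from the $\tau$-dimension expander property (which gives factor $1+\tau \geq 1+\tau\eps/2$); if $n/2 < \dim(U_j) = \alpha n < (1-\eps)n$, then $1-\alpha > \eps$, so Lemma~\ref{lem-expandlarge} gives growth factor $1 + \tau(1-\alpha)/2 > 1 + \tau\eps/2$. Hence, until we first reach dimension $\geq (1-\eps)n$, we have $\dim(U_j) \geq \eps n \cdot (1+\tau\eps/2)^j$. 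Wait — I should be careful: I want the final bound $(1-\eps)n$, not $\eps n$; let me instead phrase the multiplicative growth more cleanly. Actually the cleanest route is: as long as $\dim(U_j) \le n/2$, each step multiplies the dimension by at least $1+\tau$; this phase lasts at most $\log_{1+\tau}(1/(2\eps)) \le \log_2(1/\eps)/\tau$ steps (using $\log_{1+\tau}(x) \le \log_2(x)/\tau$ for $\tau \le 1$). Then, once $\dim(U_j) > n/2$, writing $\dim(U_j) = \alpha_j n$, Lemma~\ref{lem-expandlarge} shows the \emph{codimension} shrinks: $1-\alpha_{j+1} \le (1-\alpha_j)(1 - \tau(1-\alpha_j)/2 \cdot \alpha_j/(1-\alpha_j)\cdot\ldots)$ — this is getting messy, so I will instead just use that $\dim(U_{j+1}) \ge (1+\tau(1-\alpha_j)/2)\alpha_j n$, and while $\alpha_j < 1-\eps$ we get $\dim(U_{j+1}) - \dim(U_j) \ge \tau\eps\alpha_j n/2 \ge \tau\eps n/4$, so the dimension increases by an additive $\Omega(\tau\eps n)$ each step, finishing this phase in $O(1/(\tau\eps))$ more steps.

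Combining, after $t \ge \log_2(1/\eps)/\tau + O(1/(\tau\eps))$ steps we reach $\dim(U_t) \ge (1-\eps)n$, and I need to check that the stated bound $t > 3\log_2(1/\eps)/\tau$ suffices — here the additive-phase count $O(1/(\tau\eps))$ must be absorbed into $3\log_2(1/\eps)/\tau$, which needs the large-subspace phase to also be handled multiplicatively (in the codimension) rather than additively. So the main obstacle, and the step I'd spend the most care on, is the large-subspace regime: I expect one should show that the codimension $1-\alpha_j$ shrinks by a constant factor bounded away from $1$ at each step (something like $1-\alpha_{j+1} \le (1-\tau/4)(1-\alpha_j)$ once $\alpha_j > 1/2$), so that this phase also takes only $O(\log(1/\eps)/\tau)$ steps, and then tune constants so the total is at most $3\log_2(1/\eps)/\tau$. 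The small-subspace phase is routine once the expander definition is in hand; the semantics of ${\cal A}^{(t)}$ versus nested applications is a one-line induction using that $\id \in \cal A$.
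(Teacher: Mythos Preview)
Your overall approach matches the paper's: define $U_j = \sum_{B \in {\cal A}^{(j)}} B(U)$, handle the regime $\dim(U_j) \le n/2$ by the raw expander property (taking at most $\tau^{-1}\log_2(1/\eps)$ steps), then use Lemma~\ref{lem-expandlarge} for the regime $\dim(U_j) > n/2$. The only difference is how you analyze the second phase.

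The paper partitions $(1/2,\,1-\eps]$ into the dyadic intervals $J_i = (1-2^{-i},\,1-2^{-(i+1)}]$ for $i=1,\dots,\log_2(1/\eps)-1$, and argues that within $J_i$ the expansion factor is at least $1+\tau/2^{i+2}$, so each interval is crossed in at most $2/\tau$ steps; this gives $2\log_2(1/\eps)/\tau$ steps for phase two and hence the constant $3$ in the statement. Your proposed alternative---show that the codimension contracts geometrically---is actually correct and arguably cleaner: from Lemma~\ref{lem-expandlarge} one computes directly
\[
1-\alpha_{j+1} \;\le\; 1 - \bigl(1+\tfrac{\tau(1-\alpha_j)}{2}\bigr)\alpha_j \;=\; (1-\alpha_j)\bigl(1-\tfrac{\tau\alpha_j}{2}\bigr) \;\le\; (1-\alpha_j)\bigl(1-\tfrac{\tau}{4}\bigr)
\]
once $\alpha_j>1/2$, exactly as you guessed. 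The only cost is that this contraction rate yields roughly $\frac{4}{\ln 2}\cdot\frac{\log_2(1/\eps)}{\tau}\approx \frac{2.77\log_2(1/\eps)}{\tau}$ steps for phase two, so the total is closer to $4\log_2(1/\eps)/\tau$ than $3$. Since the constant is irrelevant for the application (it only affects the polynomial dependence of $D$ on $1/\eps$), your argument is fine for the purpose at hand; you would just need to replace the $3$ in the lemma statement, or sharpen the contraction estimate slightly, to match the paper's constant.
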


\begin{proof}
	Let $U = U_0$ be such that $\dim(U_0)\geq \eps n$ and, for all $t \geq 1$ let $$U_t = \sum_{i \in [D]}A_i(U_{t-1}).$$  Notice that as long as  $\dim(U_t) \leq n/2$ we have that $$\dim(U_{t+1}) \geq (1+ \tau)\dim(U_t).$$ Thus, after at most $t_1 \leq \tau^{-1}\log_2(1/\eps)$ steps we have $\dim(U_{t_1}) > n/2.$ Once we are in this regime, we use Lemma~\ref{lem-expandlarge} to argue about expansion. We can divide the interval $(1/2,1-\eps]$ into $\log_2(1/\eps)-1$ intervals $$J_i = (1 - 2^{-i},1 - 2^{-(i+1)}]$$ for $i = 1,\ldots, \log_2(1/\eps)-1$.  By Lemma~\ref{lem-expandlarge}, if $\dim(U_t) = \alpha n$ with $\alpha \in J_i$ then $$\dim(U_{t+1}) \geq (1+\tau/2^{i+2})\alpha n.$$
	Hence, once $\alpha$ enters $J_i$, it will move to the next interval after at most $2/\tau$ steps. Indeed, suppose $\alpha > 1- 2^{-i}$ then
	$$ \alpha(1 + \tau/2^{i+2})^{2/\tau} > (1 - 1/2^{i})(1+1/2^{i+1}) \geq 1 - 1/2^{i+1}.$$ Hence, after an additional $2\log_2(1/\eps)/\tau$ steps we will have $\dim(U_t) \geq (1 - \eps)n$ as required.
\end{proof}

Hence, an explicit construction of an $\tau$-dimension-expander for some constant $\tau>0$ and with a constant number $D$ of matrices will imply an explicit construction of an $(\eps n, (1-\eps)n)$-dimension spreading family of size $D' = \poly(1/\eps)$. This, together with Theorem~\ref{thm-main} (replacing $\eps$ with $\eps/2$) will imply Corollary~\ref{cor-zeroone}. Notice that we can always include all the conjugate transposed (and the identity) with a constant blow up in $D$.

\paragraph{Constructions of Dimension Expanders:} There are three main approaches for constructing dimension expanders. The first, using representation theory, is due to Lubozky and Zelmanov \cite{LZ08} and works over the complex numbers. What they show is that, if $G$ is a finite group generated as an expander by a  set $S$ (i.e., the corresponding Cayley graph is a vertex expander) and if $\rho : G \mapsto U(\C^n)$ is an irreducible  representation, then the set of linear maps $\rho(S)$ is a dimension expander (for some $\tau>0$ depending on the  expansion of the associated Cayley graph). Another approach, yielding dimension expanders over sufficiently large finite fields, is given in  \cite{FG15,GRX18} and relies on ideas from coding/design theory.

Finally, the most general construction is due to Bourgain and Yehudayoff \cite{BY13} and works over any field. It follows from an explicit construction of a constant degree {\em monotone expander}. A degree $D$ monotone graph is a bipartite graph $G$ on vertex set $[n] \times [n]$ that can be partitioned into $D$ {\em monotone matchings}, which are (partial) matchings in which every pair of edges $(i_1,j_1)$ and $(i_2,j_2)$ satisfy $i_1 < i_2 \,\implies j_1<j_2$. If $G$ is also a vertex expander (every set $A \subset [n]$ of at most $n/2$ left vertices  has at least $(1+\tau)|A|$ neighbors) then $G$ is a degree $D$ monotone expander. It was shown in \cite{DW10} (and implicitly used in \cite{DS07}) that a degree $D$ monotone expander  gives a set of $D$ linear maps that are a $\tau$-dimension expander. The linear maps are defined directly from the matchings comprising the graph as follows: Suppose $f : [n] \rightarrow [n] \cup \{\bot\}$ is a partial function representing one of the monotone matchings such that $i_1 < i_2 \implies f(i_1) < f(i_2)$ (in case both are defined). We define the linear map $A_f$ sending $e_i$ to $e_{f(i)}$ if $f(i) \neq \bot$ and to $0$ otherwise, where $e_1,\ldots,e_n$ is the standard basis of $\F^n$. It is not hard to show that, over any field, the resulting family of  $D$ maps $A_f$, taken over all matchings defining $G$, is a dimension-expander whenever the graph $G$ is a monotone expander. 

Unlike with `standard' expander graph (without monotonicity), there is no easy probabilistic argument showing the existence of constant degree monotone expanders. In \cite{DW10} explicit constructions of monotone expanders with slowly growing (but not constant) $D = D(n)$ were given using an analog of the Zig-Zag product. The construction in  \cite{BY13} is the only known construction (or even existence proof) of constant degree monotone expanders and uses a far reaching extension of the Bourgain-Gambourd `machine' \cite{BG07} (used to construct expanding Cayley graphs for finite groups) to the group $SL_2(\R)$. Using the dimension expanders of \cite{BY13}, which give some (unspecified but explicit) constants $D$ and $\tau$ and using Lemma~\ref{lem-expandlarge} we conclude the proof of Corollary~\ref{cor-zeroone}.

\bibliographystyle{alpha}

\bibliography{tensorrank.bib}

\end{document}